\def\mytitle#1{\setcounter{equation}{0}
\setcounter{footnote}{0}
\begin{center}\Large\textbf{#1}\end{center}
\vspace{0.25cm}}
\def\myname#1{\centerline{{\large #1}}\vspace{-0.13cm}}
\newtheorem{theorem}{Theorem}[section]
\newtheorem{conjecture}[theorem]{Conjecture}
\newtheorem{corollary}[theorem]{Corollary}
\newtheorem{definition}[theorem]{Definition}
\newtheorem{example}[theorem]{Example}
\newenvironment{proof}[1][Proof]{\noindent\textbf{#1: }}{\hspace{\stretch{1}}\rule{0.5em}{0.5em}}
\begin{document}

\mytitle{On Erdos-Faber-Lovasz Conjecture }
\myname{S. M. Hegde, Suresh Dara}

\begin{center}
Department of Mathematical and Computational Sciences, \\National Institute of Technology Karnataka, \\Surathkal, Mangalore-575025
\\
{\em E-mail:} smhegde@nitk.ac.in, suresh.dara@gmail.com
\end{center}


\begin{abstract}
In 1972, Erd\"{o}s - Faber - Lov\'{a}sz (EFL) conjectured that, if $\textbf{H}$ is a linear hypergraph consisting of $n$ edges of cardinality $n$, then it is possible to color the vertices with $n$ colors so that no two vertices with the same color are in the same edge. In 1978, Deza, Erd\"{o}s and  Frankl had given an equivalent version of the same for graphs: Let $G= \bigcup _{i=1}^{n} A_i$ denote a graph with $n$ complete graphs $A_1, A_2,$ $ \dots , A_n$, each having exactly $n$ vertices and have the property that every pair of complete graphs has at most one common vertex, then the chromatic number of $G$ is $n$. 

The clique degree $d^K(v)$ of a vertex $v$ in $G$ is given by $d^K(v) = |\{A_i: v \in V(A_i), 1 \leq i \leq n\}|$. 
In this paper we give a method for assigning colors to
the graphs satisfying the hypothesis of the Erdös - Faber - Lovász conjecture using intersection matrix of the cliques $A_i$'s of $G$ and clique degrees of the vertices of $G$. Also, we
give theoretical proof of the conjecture for some class of graphs. In particular we show that:

\begin{enumerate}
\item If $G$ is a graph satisfying the hypothesis of the Conjecture \ref{EFL} and every $A_i$ ($1 \leq i \leq n$) has at most $\sqrt{n}$ vertices of clique degree greater than 1, then $G$ is $n$-colorable.

\item If $G$ is a graph satisfying the hypothesis of the Conjecture \ref{EFL} and every $A_i$ ($1 \leq i \leq n$) has at most $\left \lceil {\frac{n+d-1}{d}} \right \rceil$ vertices of clique degree greater than or equal to $d$ ($2\leq d \leq n$), then $G$ is $n$-colorable.
\end{enumerate}

\end{abstract}

{\textbf{Keyword}}
\textit{Chromatic number, Erd\"{o}s - Faber - Lov\'{a}sz conjecture}

{\textbf{MSC[2010]}}\textit{  05A05, 05B15, 05C15}



\section{Introduction}

One of the famous conjectures in graph theory is Erd\"{o}s - Faber - Lov\'{a}sz conjecture. It states that if $\textbf{H}$ is a linear hypergraph consisting of $n$ edges of cardinality $n$, then it is possible to color the vertices of $\textbf{H}$ with $n$ colors so that no two vertices with the same color are in the same edge \cite{berge1990onvizing}. Erd\"{o}s, in 1975, offered 50 USD \cite{erdHos1975problems, arroyo2008dense} and in 1981, offered 500 USD \cite{erdHos1981combinatorial, jensen2011graph} for the proof or disproof of the conjecture. Kahn \cite{kahn1992coloring} showed that the chromatic number of $\textbf{H}$ is at most $n+o(n)$. Jakson et al.  \cite{jackson2007note} proved that the conjecture is true when the partial hypergraph $S$ of $\textbf{H}$ determined by the edges of size at least three can be $\Delta_S$-edge-colored and satisfies $\Delta_S \leq 3$. In particular, the conjecture holds when $S$ is unimodular and $\Delta_S \leq 3$. Sanhez - Arrayo \cite{arroyo2008dense} proved the conjecture for dense hypergraphs. Faber \cite{faber2010uniformregular} proves that for fixed degree, there can be only finitely many counterexamples to EFL on this class (both regular and uniform) of hypergraphs.  

\begin{conjecture}
 If $\textbf{H}$ is a linear hypergraph consisting of $n$ edges of cardinality $n$, then it is possible to color the vertices of $\textbf{H}$ with $n$ colors so that no two vertices with the same color are in the same edge.
\end{conjecture}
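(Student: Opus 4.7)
The plan is to work through the equivalent formulation of Deza, Erd\"{o}s, and Frankl: assume $G = \bigcup_{i=1}^{n} A_i$ is a graph built from $n$ cliques of size $n$, pairwise sharing at most one vertex. Partition $V(G)$ into the \emph{private} vertices, those with $d^K(v) = 1$, and the \emph{shared} vertices, those with $d^K(v) \geq 2$. A key first reduction is that the whole problem collapses to coloring the shared vertices: once a proper $n$-coloring of the shared vertices is given (so that shared vertices lying in a common $A_i$ receive distinct colors), each clique $A_i$ has some $k_i$ shared vertices already colored, and its remaining $n - k_i$ private vertices can be assigned the $n - k_i$ unused colors arbitrarily, yielding a global proper $n$-coloring of $G$.

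The next step is to encode the shared-vertex coloring problem through the intersection matrix $M$ of the cliques, where the nonzero entries of row $i$ list the shared vertices of $A_i$, indexed by the other cliques containing them. The task becomes to assign colors from $\{1,\ldots,n\}$ to the shared vertices such that row $i$ contains no color twice. I would process the cliques in a carefully chosen order --- for instance, decreasing in the number of shared vertices, or decreasing in the maximum clique degree of a shared vertex --- and for each clique assign colors to its not-yet-colored shared vertices greedily, respecting the constraints already imposed by earlier cliques. Using the linearity count $\sum_{v \text{ shared}} \binom{d^K(v)}{2} \leq \binom{n}{2}$ together with the clique-degree bounds appearing in the two partial results of the abstract, one can check that the greedy procedure always has an available color in the restricted regimes covered by those theorems.

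The principal obstacle in extending to the general case is handling cliques that contain many shared vertices, several of which have high clique degree and whose colors were forced incompatibly by earlier cliques. The greedy procedure can then genuinely fail, and a local repair --- swapping colors between two cliques to resolve a conflict --- may propagate without terminating. I expect the decisive step to be a global Hall-type or matching argument on the bipartite incidence between shared vertices and colors, exploiting the full structure of $M$ rather than a single linear ordering; producing such an argument from the clique-degree statistics alone is where the proof is genuinely hard, and it is precisely the point at which the hypotheses of items (1) and (2) in the abstract would have to be replaced by a subtler combinatorial insight.
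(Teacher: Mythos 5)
This statement is the Erd\H{o}s--Faber--Lov\'asz conjecture itself, and the paper does not prove it: it is stated as a conjecture, and the paper only establishes special cases (Theorems \ref{SY1} and \ref{SY2}, which impose bounds on how many vertices of clique degree greater than $1$, or of clique degree at least $d$, each clique $A_i$ may contain). So there is no ``paper's own proof'' to compare against, and your proposal should be judged as a standalone attempt at the full conjecture. As such, it has a genuine and explicitly acknowledged gap. Your opening reduction is sound and matches the paper's setup: it suffices to properly $n$-color the vertices of clique degree at least $2$, since each $A_i$ then has exactly enough unused colors for its private vertices. Your greedy step is also essentially the paper's argument for its two partial results: coloring shared vertices in non-increasing order of clique degree, a vertex $v$ of clique degree $k$ sees at most $k$ cliques, and if each of those cliques has few enough already-colored shared vertices (at most $\sqrt{n}$, or at most $\lceil (n+d-1)/d\rceil$ of degree $\geq d$), the count of forbidden colors stays below $n$. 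That is exactly where the hypotheses of items (1) and (2) of the abstract enter, and it is exactly where the argument stops working in general.

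The concrete failure point is this: without those hypotheses, a single clique $A_i$ can contain up to $n$ vertices of clique degree $2$ (the identity $\sum_v \binom{d^K(v)}{2} \leq \binom{n}{2}$ permits roughly $\binom{n}{2}$ degree-$2$ vertices globally, so individual cliques can be saturated with shared vertices). A vertex $v$ of clique degree $2$ lying in two such saturated cliques can then face up to $2(n-1)$ previously assigned colors, and the greedy count gives no free color. Your proposed remedy --- a Hall-type or matching argument on the bipartite incidence between shared vertices and colors --- is named but not constructed; you do not exhibit the system of distinct representatives, verify Hall's condition, or give any mechanism by which the degree statistics force it. Since you yourself identify this as ``where the proof is genuinely hard'' and leave it unresolved, the proposal is a correct reduction plus a correct partial argument (coextensive with what the paper actually proves), but not a proof of the conjecture.
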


We consider the equivalent version of the conjecture for graphs given by Deza, Erd\"{o}s and  Frankl in 1978 \cite{deza1978intersection, arroyo2008dense, jensen2011graph, mitchem2010arscombin}.

\begin{conjecture}\label{EFL}
Let $G= \bigcup _{i=1}^{n} A_i$ denote a graph with $n$ complete graphs $(A_1, A_2,$ $ \dots , A_n)$, each having exactly $n$ vertices and have the property that every pair of complete graphs has at most one common vertex, then the chromatic number of $G$ is $n$.
\end{conjecture}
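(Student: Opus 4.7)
The plan is to construct a proper $n$-coloring of $G$ by processing the cliques $A_1, \ldots, A_n$ iteratively and extending a partial coloring one clique at a time. Because $|A_i \cap A_j| \leq 1$ for $i \neq j$ and $|A_i| = n$, the total vertex--clique incidence satisfies $\sum_{v \in V(G)} d^K(v) = n^2$, so the budget of $n$ colors is \emph{exactly} tight: each color class must meet every clique in precisely one vertex. This reframes the problem as building a ``clique--color'' bijection on each $A_i$ that is consistent across overlaps, which I will track using the intersection matrix $M = [m_{ij}]$ with $m_{ij} = |A_i \cap A_j| \in \{0, 1\}$ together with the clique degrees $d^K(v)$.

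First, I would order the cliques so that those containing many high-clique-degree vertices are colored first, so that the ``most constrained'' shared vertices fix their colors early and subsequent cliques adapt. At stage $i$, let $P_i \subseteq V(A_i)$ be the vertices that have inherited colors from earlier cliques; linearity gives $|P_i| \leq i - 1$, and the extension succeeds locally provided (a) the colors on $P_i$ are pairwise distinct, and (b) the remaining $n - |P_i|$ colors suffice to finish off $V(A_i) \setminus P_i$. Condition (b) is automatic by counting, so the real work lies in ensuring (a) globally and in choosing colors for shared vertices so as not to sabotage later cliques.

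The third step turns (a) into a system of distinct representatives. For each clique $A_i$, the shared vertices must receive pairwise distinct colors; viewed globally, the coloring is a family of bijections $f_i : V(A_i) \to \{1, \ldots, n\}$ that agree on every nonempty intersection $A_i \cap A_j$. The existence of such a coherent family is equivalent to a Hall-type condition on the hypergraph of shared vertices. I would attempt to verify it by processing cliques in the order above and, for each shared vertex encountered, selecting a color that is not yet used within any clique containing it and that leaves maximal freedom downstream; an exchange or alternating-path argument in the intersection graph would then be used to resolve late conflicts by short recoloring swaps.

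The chief obstacle will be establishing (a) in full generality. When a single clique $A_i$ contains many shared vertices of high clique degree, each incoming color is dictated by a different earlier clique, and arranging them to be pairwise distinct demands coordination across the entire intersection graph. This is precisely why the two enumerated special cases in the abstract --- bounding the number of vertices of clique degree $\geq d$ in each $A_i$ --- should admit a clean proof: there are few enough shared vertices per clique for a direct greedy or Hall-matching argument to finish the job. Extending this to the unrestricted setting, which is the heart of EFL, appears to require either a nontrivial structural reduction to the bounded-degree regime or a global probabilistic argument of the type used by Kahn for his $n + o(n)$ bound, and this is where the real difficulty lies.
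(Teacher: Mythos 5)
This statement is the Erd\H{o}s--Faber--Lov\'asz conjecture itself, and the paper does not prove it: it is stated as a conjecture, and the paper only establishes two restricted special cases (Theorems \ref{SY1} and \ref{SY2}) plus a coloring heuristic based on the intersection matrix. Your proposal likewise does not constitute a proof, and to your credit it says so explicitly: the entire argument funnels into condition (a) --- that the colors inherited by the shared vertices of each clique are pairwise distinct and can be chosen coherently across the whole intersection structure --- and you resolve it only by appeal to an unspecified ``Hall-type condition'' and ``exchange or alternating-path argument.'' That is precisely the open content of EFL; no such exchange argument is known to close the general case, and invoking one without constructing it is a genuine gap, not a deferrable detail. (Your preliminary observations are sound: since each $A_i$ is a complete graph on $n$ vertices, any proper $n$-coloring restricts to a bijection $V(A_i)\to\{1,\dots,n\}$, and $|P_i|\le i-1$ follows from linearity. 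But these facts only set up the problem; they do not attack it.)

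Where your sketch does make contact with the paper is in the special cases. Your remark that bounding the number of shared vertices per clique lets ``a direct greedy argument finish the job'' is exactly the mechanism of Theorems \ref{SY1} and \ref{SY2}: the paper colors vertices in non-increasing order of clique degree and observes that when a vertex $v$ of clique degree $k$ is reached, the $k$ cliques containing $v$ have so far received at most $k\sqrt{n}<n$ (respectively $k(\lceil n/k\rceil-1)<n$) colors, so a free color exists. Your intersection-matrix bookkeeping also mirrors the paper's ``color matrix'' method, which is presented as a procedure with a worked example rather than as a proof. So the honest summary is: your plan reproduces, in outline, what the paper actually achieves for restricted classes, and stalls at exactly the same place the paper (and the literature it cites) stalls for the general conjecture.
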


\begin{definition}\label{d1}
Let $G= \bigcup _{i=1}^{n} A_i$ denote a graph with $n$ complete graphs $A_1, A_2,$ $ \dots , A_n$, each having exactly $n$ vertices and the property that every pair of complete graphs has at most one common vertex. The clique degree $d^K(G)$ of a vertex $v$ in $G$ is given by $d^K(v) = |\{A_i: v \in V(A_i), 1 \leq i \leq n\}|$. The maximum clique degree $\Delta ^K(G)$ of the graph $G$ is given by $\Delta ^K(G)=max_{v\in V(G)}d^K(v)$.
\end{definition}

From the above definition, one can observe that degree of a vertex in hypergraph is same as the clique degree of a vertex in a graph.

\section{Coloring of $G$}

Let $G$ be the graph satisfying the hypothesis of Conjecture \ref{EFL}. Let $\hat{H}$ be the graph obtained by removing the vertices of clique degree one from graph $G$. i.e. $\hat{H}$ is the induced subgraph of $G$ having all the common vertices of $G$.

\begin{theorem}\label{SY1}
If $G$ is a graph satisfying the hypothesis of the Conjecture \ref{EFL} and every $A_i$ ($1 \leq i \leq n$) has at most $\sqrt{n}$ vertices of clique degree greater than 1, then $G$ is $n$-colorable.
\end{theorem}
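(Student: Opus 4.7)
My plan is to split $G$ into its common vertices (the subgraph $\hat H$) and its private vertices (those of clique degree $1$), color $\hat H$ first with $n$ colors, and then extend the coloring to the private vertices. For each $A_i$, set $B_i = V(A_i) \cap V(\hat H)$, so $|B_i| \le \sqrt n$ by hypothesis, and let $P_i = V(A_i) \setminus B_i$, with $|P_i| = n - |B_i|$.

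The extension step is straightforward: given any proper $n$-coloring of $\hat H$, the clique $B_i \subseteq \hat H$ already receives $|B_i|$ distinct colors, so the $n - |B_i|$ colors still free inside $A_i$ can be assigned bijectively to the $|P_i|$ private vertices. This produces $n$ distinct colors on all of $A_i$; and since each vertex of $P_i$ lies in no clique other than $A_i$, no conflict can arise between distinct $A_i$'s. So it suffices to show $\chi(\hat H) \le n$.

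For this I would use a simple edge-count. By linearity, $|A_i \cap A_j| \le 1$, so no edge of $\hat H$ can be contained in two distinct cliques $B_i, B_j$ (such an edge would witness $|A_i \cap A_j| \ge 2$). Hence
\[
|E(\hat H)| \;=\; \sum_{i=1}^{n} \binom{|B_i|}{2} \;\le\; n \binom{\sqrt n}{2} \;=\; \frac{n(n-\sqrt n)}{2} \;<\; \frac{n(n+1)}{2}.
\]
On the other hand, any graph of chromatic number at least $n+1$ contains an $(n+1)$-critical subgraph, which must have minimum degree at least $n$ on at least $n+1$ vertices, hence at least $n(n+1)/2$ edges. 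Since $\hat H$ has strictly fewer, $\chi(\hat H) \le n$, as required.

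The step I expect to demand the most care is the edge-count identity for $\hat H$: the equality $|E(\hat H)| = \sum_i \binom{|B_i|}{2}$ uses the linearity hypothesis in an essential way to rule out double counting across different $B_i$'s. Once that identity is in hand, the critical-graph lower bound on edges and the extension step are routine, and they assemble immediately into an $n$-coloring of $G$.
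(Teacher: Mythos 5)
Your proof is correct, but it follows a genuinely different route from the paper's. The paper's argument is a greedy one: it first invokes the S\'anchez-Arroyo result for dense hypergraphs to colour all vertices of clique degree at least $\sqrt{n}$, then colours the remaining common vertices in non-increasing order of clique degree, observing that a vertex of clique degree $k<\sqrt{n}$ lies in $k$ cliques each containing at most $\sqrt{n}$ coloured vertices, so at most $k\sqrt{n}<n$ colours are forbidden and a free colour remains; the extension to the clique-degree-one vertices is left implicit. You instead make the reduction to $\hat H$ explicit (your extension step is the standard one and is sound, since each $B_i$ is a clique receiving $|B_i|$ distinct colours and private vertices have no neighbours outside their own $A_i$), and then bound $\chi(\hat H)$ globally by an edge count: linearity guarantees each edge of $\hat H$ lies in exactly one $B_i$, so $|E(\hat H)|=\sum_{i}\binom{|B_i|}{2}\le n\binom{\sqrt n}{2}<\binom{n+1}{2}$, and the minimum-degree bound for $(n+1)$-critical graphs then forces $\chi(\hat H)\le n$. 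Your version is self-contained --- it needs no appeal to the dense case --- and its double-counting of intersections is in the same spirit as the paper's later Theorem \ref{t2}; what it gives up is any constructive description of the colouring, whereas the paper's greedy ordering by clique degree is the template for the algorithmic colouring method it develops afterwards. Both arguments degrade at essentially the same threshold of about $\sqrt n$ common vertices per clique. One cosmetic point: $\sqrt n$ need not be an integer, so you should read $\binom{\sqrt n}{2}$ as $\sqrt n(\sqrt n-1)/2$ or replace $\sqrt n$ by $\lfloor\sqrt n\rfloor$; the inequality is unaffected.
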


\begin{proof}
Let $G$ be a graph satisfying the hypothesis of the Conjecture \ref{EFL} and every $A_i$ ($1 \leq i \leq n$) has at most $\sqrt{n}$ vertices of clique degree greater than 1. Let $\hat{H}$ be the induced subgraph of $G$ consisting of the vertices of clique degree greater than one in $G$. Define
$X=\{b_{i,j}:A_i \cap A_j = \emptyset \}$, $X_i=\{v\in G: d^K(v)=i\}$ for $i=1, 2, \dots, n$.

From \cite{arroyo2008dense}, it is true that the vertices of clique degree greater than or equal to $\sqrt{n}$ can be assigned with at most $n$ colors. Assign the colors to the vertices of clique degree in non increasing order. Assume we next color a vertex $v$ of clique degree $1<k<\sqrt{n}$. At this point only vertices of clique degree $\geq k$ have been assigned colors. By assumption every $A_i$ ($1 \leq i \leq n$) has at most $\sqrt{n}$ vertices of clique degree greater than 1 and clique degree of $v$ is $k$ ($k < \sqrt{n}$), then for these $k$ $A_i's$ there are at most $k \sqrt{n}<n$ vertices have been colored. Therefore, there is an unused color from the set of $n$ colors, then that color can be assigned to the vertex $v$.
\end{proof}

\begin{theorem}\label{SY2}
If $G$ is a graph satisfying the hypothesis of the Conjecture \ref{EFL} and every $A_i$ ($1 \leq i \leq n$) has at most $\left \lceil {\frac{n+d-1}{d}} \right \rceil$ vertices of clique degree greater than or equal to $d$ ($2\leq d \leq n$), then $G$ is $n$-colorable.
\end{theorem}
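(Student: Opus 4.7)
The plan is to mimic the argument of Theorem~\ref{SY1}, with the parameter $d$ replacing the threshold $\sqrt{n}$. I would split the coloring of $G$ into three phases: first color the vertices of clique degree at least $d$, then extend greedily to vertices of intermediate clique degree (in non-increasing order of clique degree), and finally color the clique-degree-$1$ vertices trivially.

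For the first phase, I would invoke the same result from \cite{arroyo2008dense} that is used in the proof of Theorem~\ref{SY1}, applied to the subgraph $\hat{H}_d$ induced by the vertices of clique degree $\ge d$. The hypothesis says that $\hat{H}_d\cap A_i$ has at most $\lceil(n+d-1)/d\rceil$ vertices for every $i$; this is the structural input needed to color $\hat{H}_d$ with at most $n$ colors.

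For the second phase, I would process the remaining vertices in non-increasing order of clique degree. When we reach a vertex $v$ of clique degree $k$ with $1<k<d$, only vertices of clique degree at least $k$ have been colored. In each of the $k$ cliques $A_i$ containing $v$, the already-colored vertices are among those counted by the hypothesis, so at most $\lceil(n+d-1)/d\rceil$ of them lie in $A_i$. Linearity of the hypergraph ensures that any two of these $A_i$'s intersect only in $v$, which is not yet colored, so the already-colored neighbors of $v$ across the $k$ cliques are distinct. Hence the number of already-colored neighbors of $v$ is at most
\[
k\cdot\left\lceil\frac{n+d-1}{d}\right\rceil \;<\; n,
\]
leaving one of the $n$ colors free to assign to $v$. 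A clique-degree-$1$ vertex then lies in a single $A_i$ and is colored with any color not yet used in that clique.

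The main obstacle I foresee is the arithmetic inequality $k\cdot\lceil(n+d-1)/d\rceil<n$ for $1\le k<d$, which requires a careful case analysis depending on $n\bmod d$; the particular form $\lceil(n+d-1)/d\rceil$ of the bound appears to be chosen precisely so that this product stays at most $n-1$. A secondary subtlety is justifying in the second phase that all already-colored neighbors of $v$ in each $A_i$ are captured by the hypothesis bound, since intermediate clique-degree vertices colored between the two phases must also be controlled by the argument.
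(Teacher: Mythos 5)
Your greedy skeleton---order the vertices by non-increasing clique degree and count the already-colored vertices clique by clique---is the same as the paper's, but the way you instantiate the hypothesis leaves a genuine gap, one you flag yourself in your closing remarks but do not close. Reading the hypothesis with a single fixed $d$ bounds only the vertices of clique degree at least $d$ in each $A_i$; it says nothing about the vertices of clique degree $k$ with $2\le k<d$ that you color in your second phase, so the claim that every already-colored vertex of $A_i$ is ``captured by the hypothesis bound'' is unjustified. Worse, the arithmetic you hope for is simply false: for $n=100$, $d=20$, $k=19$ one gets $k\lceil (n+d-1)/d\rceil = 19\cdot 6 = 114 > n$, and even after excluding $v$ itself the case $n=9$, $d=6$, $k=5$ gives $5\cdot(\lceil 14/6\rceil-1) = 10 > 9$; no case analysis on $n \bmod d$ rescues the single-threshold reading. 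Your first phase is also not covered by the result of S\'anchez-Arroyo \cite{arroyo2008dense} as used in Theorem \ref{SY1}, since that result handles vertices of clique degree at least $\sqrt{n}$, not at least $d$ when $d<\sqrt{n}$.

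The paper's proof avoids all of this by (implicitly) reading the hypothesis as holding for every $d$ in the range $2\le d\le n$ and applying it at level $d=k$ at the moment a vertex $v$ of clique degree $k$ is colored: each of the $k$ cliques through $v$ then contains at most $\lceil n/k\rceil$ vertices of clique degree at least $k$, of which one is $v$ itself, so at most $k(\lceil n/k\rceil-1)<n$ colors are forbidden. This makes the separate first phase, and the appeal to \cite{arroyo2008dense}, unnecessary. (Be aware that the paper is itself careless here: its proof silently replaces the stated bound $\lceil (n+d-1)/d\rceil$ by $\lceil n/d\rceil$, and the two differ unless $n\equiv 1 \pmod d$; the inequality $k(\lceil n/k\rceil-1)<n$ is what actually makes the argument work.) To repair your write-up you must invoke the hypothesis at every level $k$, not only at the fixed level $d$.
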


\begin{proof}
Let $G$ be a graph satisfying the hypothesis of the Conjecture \ref{EFL} and every $A_i$ ($1 \leq i \leq n$) has at most $\left \lceil {\frac{n}{d}} \right \rceil$ vertices of clique degree greater than or equal to $d$ ($2\leq d \leq n$). Let $\hat{H}$ be the induced subgraph of $G$ consisting of the vertices of clique degree greater than one in $G$. Define
$X=\{b_{i,j}:A_i \cap A_j = \emptyset \}$, $X_i=\{v\in G: d^K(v)=i\}$ for $i=1, 2, \dots, m$.

Assign the colors to the vertices of clique degree in non increasing order. Assume we next color a vertex $v$ of clique degree $k$. At this point only vertices of clique degree $\geq k$ have been assigned colors. By assumption every $A_i$ ($1 \leq i \leq n$) has at most $\sqrt{n}$ vertices of clique degree greater than 1 and clique degree of $v$ is $k$ ($k < \sqrt{n}$), then for these $k$ $A_i$'s there are at most $k (\left \lceil {\frac{n}{k}} \right \rceil -1)<n$ vertices have been colored. Therefore, there is an unused color from the set of $n$ colors, then that color can be assigned to the vertex $v$.
\end{proof}

Let $G$ be a graph satisfying the hypothesis of the Conjecture \ref{EFL}, the intersection matrix is a square $n \times n$ matrix $C$ such that its element $c_{i,j}$ is $c$ when $A_i \cap A_j \neq \emptyset$ otherwise zero for $i \neq j$ and the diagonal elements of the matrix are all zero.

Given below is a method to color graph $G$ satisfying the hypothesis of the Conjecture \ref{EFL} using intersection matrix (color matrix) of the cliques $A_i$'s of $G$ and clique degrees of the vertices of $G$.

\textbf{Method for assigning colors to graph $G$:}

Let $G$ be a graph satisfying the hypothesis of Conjecture \ref{EFL}. Let $\hat{H}$ be the induced subgraph of $G$ consisting of the vertices of clique degree greater than $1$ in $G$. Relable the vertex $v$ of clique degree greater than $1$ in $G$ by $u_x$, where $x= k_1, k_2, \dots, k_j$; vertex $v$ is in $A_{k_i}, 1 \leq i \leq j$. 
Define
$X=\{b_{i,j}:A_i \cap A_j = \emptyset \}$, $X_i=\{v\in G: d^K(v)=i\}$ for  $i=1, 2, \dots, n$.

Let $C$ be the intersection matrix of the cliques $A_i$'s of $G$ where $c_{i,j}= 0$ if $A_i \cap A_j =\emptyset$ otherwise $c$ for $i\neq j$ and $c_{i,i}$ is $0$. Let $1, 2, \dots, n$ be the $n$-colors.


Let $T_i=X_i$, $P_i= \emptyset$ and $S=\{j : T_j \neq \emptyset, 2\leq j \leq n\}$.

If $S=\emptyset$, then the graph $G$ has no vertex of clique degree greater than one, which implies $G$ has exactly $n^2$(maximum number) vertices. i.e., $G$ is $n$ components of $K_n$. Otherwise, consider the intersection matrix $C$ as the color matrix and follow the steps.


\begin{description}

    \item[Step 1:]
If $S = \emptyset$, stop the process. Otherwise, let $\max(S) = k$, for some $k, 2\leq k \leq n$. Then consider the sets $T_k$ and $P_k$, go to step 2.

     \item[Step 2:]
If $T_k=\emptyset$, go to step 1. Otherwise, choose a vertex $u_{i_1,i_2, \dots, i_k}$ from $T_k$, where $i_1 <i_2 < \dots < i_k$ and go to Step 3.

 \item[Step 3:]
Let $Y_{i}=\{y:$ color $y$ appears atleast $k-1$ times in the $i^{th}$ row of the color matrix $\}$, $i=1,2,\dots,n$. 
If $|{\bigcup_{i=i_1}^{i_k}Y_i}|=n$,  let $B_T=\bigcup_{i=2}^{n}P_i$, $B_P=\emptyset$ and go to Step 4. Otherwise, construct a new color matrix $C_1$ by putting least $x$ in $c_{i,j}$, where $x \in \{1, 2, 3, \dots n \} \setminus \bigcup_{i=i_1}^{i_k}Y_i$, $i\neq j$, $i_1 \leq i, j  \leq i_k$. Then add the vertex $u_{i_1,i_2, \dots, i_k}$ to $P_k$ and remove it from $T_k$, go to Step 2.

 \item[Step 4:]
Choose a vertex $v$ from $B_T$ such that $v\in A_i$, for some $i$, $i_1 \leq i \leq i_k$. Let $B=\{i:v\in A_i, 1\leq i \leq n\}$ and go to Step 5.
 
  \item[Step 5:]
  Let $Y_{i}=\{y:$ color $y$ appears atleast $k-1$ times in the $i^{th}$ row of the color matrix$\}$, for every $i \in B$. If $|{\bigcup_{i\in B}Y_i}|=n$ add the vertex $v$ to $B_P$ and remove it from $B_T$, go to Step 4. Otherwise  construct a new color matrix $C_2$ by putting $x$ in $c_{i,j}$, where $x \in \{1, 2, 3, \dots n \} \setminus \bigcup_{i\in B}Y_i$, $i\neq j$, $i, j  \in B$. Go to Step 3.

\end{description}

Thus, we get the modified color matrix $C_M$.
Then, color the vertex $v$ of $\hat{H}$ by $c_{i,j}$ of $C_M$, whenever $v \in A_i \cap A_j$. Then, extend the coloring of $\hat{H}$ to $G$. Thus $G$ is $n$-colorable.


Following is an example illustrating the above method.


\begin{example}
Let $G$ be the graph shown in Figure \ref{n6G}.

Let $V(A_1) = \{v_1, v_2, v_3, v_4, v_5, v_6\}$, 
$V(A_2) = \{v_1, v_7, v_8, v_9, v_{10}, v_{11}\}$,\\
$V(A_3) = \{v_1, v_{12}, v_{13}, v_{14}, v_{15}, v_{16}\}$,
$V(A_4) = \{v_1, v_{17}, v_{18}, v_{19}, v_{20}, v_{21}\}$,\\
$V(A_5) = \{v_6,  v_{7}, v_{16}, v_{22}, v_{23}, v_{24}\}$,
$V(A_6) = \{v_9, v_{16}, v_{19}, v_{25}, v_{26}, v_{27}\}$. 

Relabel the vertices of clique degree greater than one in $G$ by $u_A$ where $A=\{i:v\in A_i \text{ for } 1 \leq i \leq 6 \}$. The labeled graph is shown in Figure \ref{n6LG}. Figure \ref{n6LGH} is the graph $\hat{H}$, where $\hat{H}$ is obtained by removing the vertices of clique degree 1 from $G$.

\begin{figure}[h!]
\centering
\subfloat[Graph $G$]{\label{n6G}\includegraphics[width=3in]{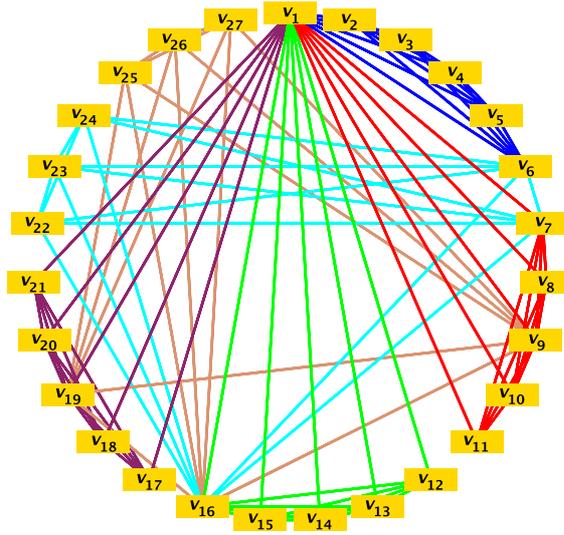}}\\
\subfloat[Graph $G$ after relabeling the vertices of clique degree greater than one]{\label{n6LG}\includegraphics[width=3in]{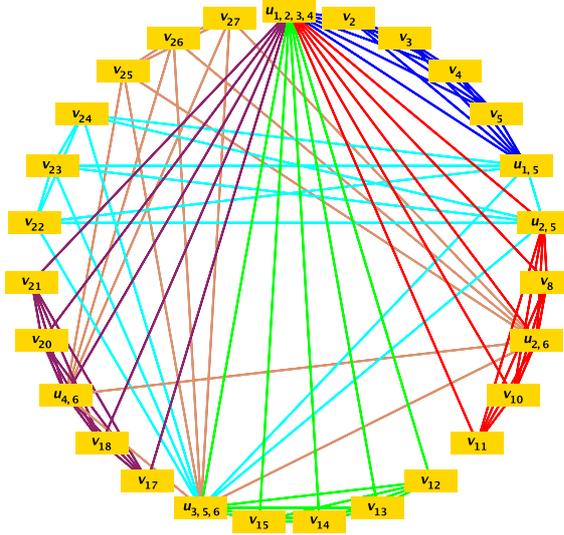}}
\caption{Graph $G$: before and after relabeling the vertices}
\label{}
\end{figure}

\begin{figure}[!h]
\centering
  \includegraphics[width=2.5in]{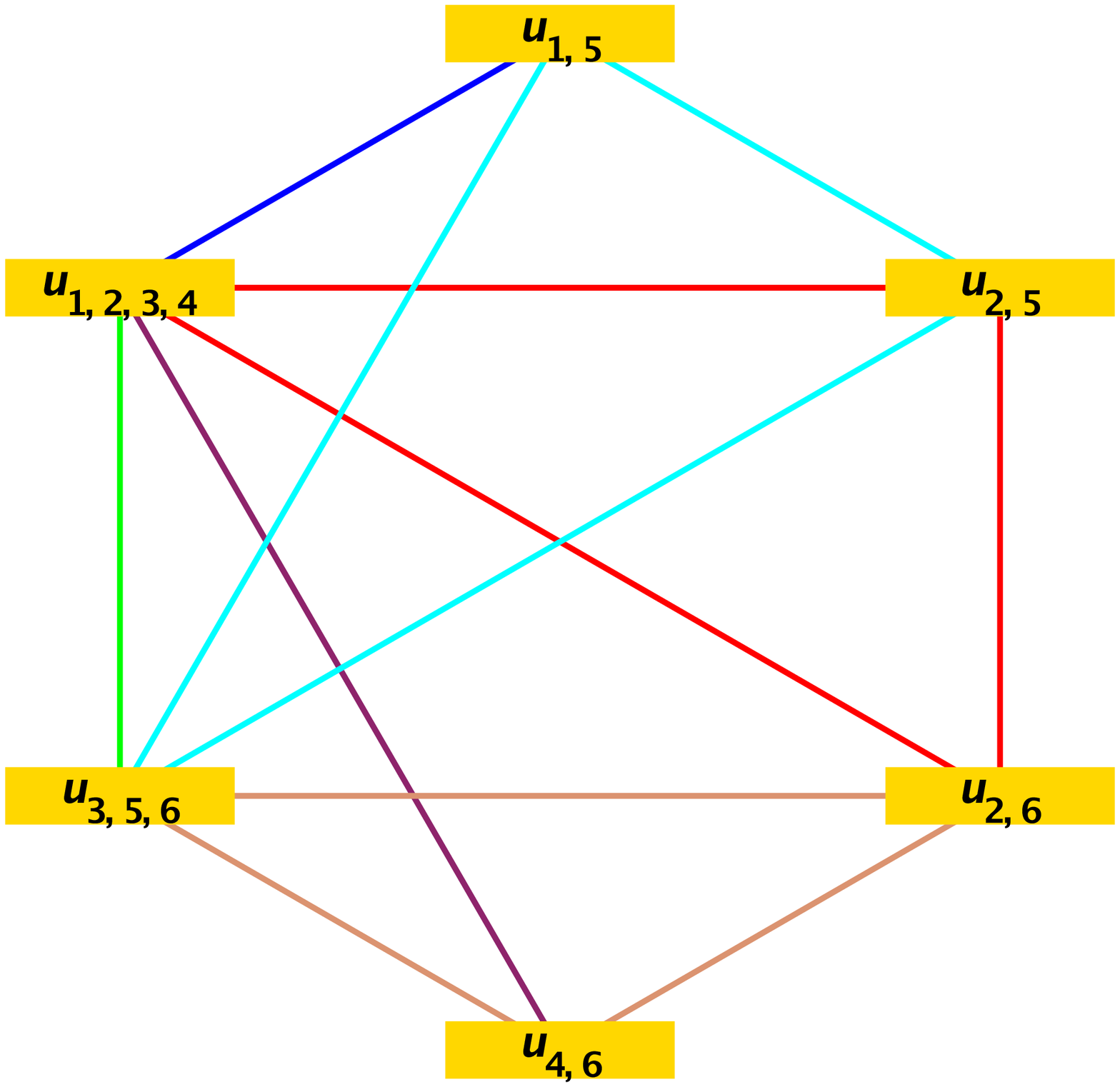}
  \caption{Graph $\hat{H}$}
\label{n6LGH}
\end{figure}

Let $X = \{b_{i,j}:A_i \cap A_j = \emptyset \} = \{b_{1,6}, b_{4,5}\} $,
\begin{eqnarray*}
X_1&=&\{v\in G: d^K(v)=1\} = \{v_2, v_3, v_5, v_{8}, v_{10}, v_{11}, v_{12}, v_{13}, v_{14}, v_{15}, \\ && v_{17}, v_{18}, v_{20}, v_{21}, v_{22}, v_{23}, v_{24}, v_{25}, v_{26}, v_{27}\},
\end{eqnarray*}
$X_2 = \{v\in G: d^K(v)=2\} = \{v_6, v_7, v_9, v_{19}\} = \{u_{1,5}, u_{2,5}, u_{2,6}, u_{4,6}\}$, \\
$X_3=\{v\in G: d^K(v)=3\} = \{v_{16}\} = \{u_{3,5,6}\} $,\\
$X_4=\{v\in G: d^K(v)=4\} = \{v_{1}\} = \{u_{1, 2, 3, 4}\} $, \\
$X_5 = \emptyset$ and $X_6=\emptyset$.

Let 1, 2, $\dots$, 6 be the six colors and $C = \left(\begin{array}{cccccc}
    0 & c & c & c & c & 0 \\
    c & 0 & c & c & c & c \\
    c & c & 0 & c & c & c \\
    c & c & c & 0 & 0 & c \\
    c & c & c & 0 & 0 & c \\
    0 & c & c & c & c & 0\\
  \end{array}\right)$ 

be the intersection matrix of order $6 \times 6$.

Consider the sets $T_i = X_i$, $P_i=\emptyset$ for $i =1,2,\dots 6$ and $S=\{j : T_j \neq \emptyset, 2\leq j \leq n\} = \{2,3,4\}$. Then by applying the method given above, we get the following.

\textbf{Step 1:}
Since $S \neq \emptyset$ and $\max (S) =4$, then choose the sets $T_4=  \{u_{1, 2, 3, 4}\}$ and $P_4 =\emptyset$. Go to step 2.

\textbf{Step 2:}
Since $T_4 \neq \emptyset$, choose the vertex $u_{1,2,3,4}$ from $T_4$, go to step 3.

\textbf{Step 3:}
Since $Y_1=\emptyset$, $Y_2=\emptyset$, $Y_3=\emptyset$, $Y_4=\emptyset$ and $|Y_1 \cup Y_2 \cup Y_3 \cup Y_4|<6$, choose the minimum color from the set $\{1,2,\dots, 6\}\setminus \cup_{i=1,2,3,4}Y_i$ and construct a new color matrix $C_1$ by putting $1$ in $c_{i,j}$, $i\neq j$, $i,j = 1,2,3,4$. Add the vertex $u_{1,2,3,4}$ to $P_4$ and remove it from $T_4$. Then

$C_1 = \left(\begin{array}{cccccc}
    0 & 1 & 1 & 1 & c & 0 \\
    1 & 0 & 1 & 1 & c & c \\
    1 & 1 & 0 & 1 & c & c \\
    1 & 1 & 1 & 0 & 0 & c \\
    c & c & c & 0 & 0 & c \\
    0 & c & c & c & c & 0\\
  \end{array}\right)$ ,

$T_4 = \emptyset$, $P_4 = \{u_{1,2,3,4}\}$. Go to step 2.

\textbf{Step 2:}
Since $T_4 = \emptyset$, go to step 1.

\textbf{Step 1:}
Since $S \neq \emptyset$ and $\max (S) =3$, then choose the sets $T_3=  \{u_{3,5,6}\}$ and $P_3 =\emptyset$. Go to step 2.

\textbf{Step 2:}
Since $T_3 \neq \emptyset$, choose the vertex $u_{3,5,6}$ from $T_3$, go to step 3.

\textbf{Step 3:}
Since $Y_3=\{1\}$, $Y_5=\emptyset$, $Y_6=\emptyset$, and $|Y_3 \cup Y_5 \cup Y_6|<6$, choose the minimum color from the set $\{1,2,\dots, 6\}\setminus \cup_{i=3,5,6}Y_i$ and construct a new color matrix $C_2$ by putting $2$ in $c_{i,j}$, $i\neq j$, $i,j = 3,5,6$. Add the vertex $u_{3,5,6}$ to $P_3$ and remove it from $T_3$. Then

$C_2 = \left(\begin{array}{cccccc}
    0 & 1 & 1 & 1 & c & 0 \\
    1 & 0 & 1 & 1 & c & c \\
    1 & 1 & 0 & 1 & 2 & 2 \\
    1 & 1 & 1 & 0 & 0 & c \\
    c & c & 2 & 0 & 0 & 2 \\
    0 & c & 2 & c & 2 & 0\\
  \end{array}\right)$,

$T_3 = \emptyset$, $P_3 = \{u_{3,5,6}\}$. Go to step 2.

\textbf{Step 2:}
Since $T_3 = \emptyset$, go to step 1.

\textbf{Step 1:}
Since $S \neq \emptyset$ and $\max (S) =2$, then choose the sets $T_2=  \{u_{1,5}, u_{2,5}, u_{2,6}, u_{4,6}\}$ and $P_2 =\emptyset$. Go to step 2.

\textbf{Step 2:}
Since $T_2 \neq \emptyset$, choose the vertex $u_{1,5}$ from $T_2$, go to step 3.

\textbf{Step 3:}
Since $Y_1=\{1\}$, $Y_5=\{2\}$ and $|Y_1 \cup Y_5|<6$, choose the minimum color from the set $\{1,2,\dots, 6\}\setminus \cup_{i=1,5}Y_i$ and construct a new color matrix $C_3$ by putting $3$ in $c_{i,j}$, $i\neq j$, $i,j = 1,5$. Add the vertex $u_{1,5}$ to $P_2$ and remove it from $T_2$. Then

$C_3 = \left(\begin{array}{cccccc}
    0 & 1 & 1 & 1 & 3 & 0 \\
    1 & 0 & 1 & 1 & c & c \\
    1 & 1 & 0 & 1 & 2 & 2 \\
    1 & 1 & 1 & 0 & 0 & c \\
    3 & c & 2 & 0 & 0 & 2 \\
    0 & c & 2 & c & 2 & 0\\
  \end{array}\right)$, 

$T_2=  \{u_{2,5}, u_{2,6}, u_{4,6}\}$, $P_2 = \{u_{1,5}\}$. Go to step 2.

\textbf{Step 2:}
Since $T_2 \neq \emptyset$, choose the vertex $u_{2,5}$ from $T_2$, go to step 3.

\textbf{Step 3:}
Since $Y_2=\{1\}$, $Y_5=\{2,3\}$ and $|Y_2 \cup Y_5|<6$, choose the minimum color from the set $\{1,2,\dots, 6\}\setminus \cup_{i=2,5}Y_i$ and construct a new color matrix $C_4$ by putting $4$ in $c_{i,j}$, $i\neq j$, $i,j = 2,5$. Add the vertex $u_{2,5}$ to $P_2$ and remove it from $T_2$. Then

$C_4 = \left(\begin{array}{cccccc}
    0 & 1 & 1 & 1 & 3 & 0 \\
    1 & 0 & 1 & 1 & 4 & c \\
    1 & 1 & 0 & 1 & 2 & 2 \\
    1 & 1 & 1 & 0 & 0 & c \\
    3 & 4 & 2 & 0 & 0 & 2 \\
    0 & c & 2 & c & 2 & 0\\
  \end{array}\right)$, 

$T_2=  \{u_{2,6}, u_{4,6}\}$, $P_2 = \{u_{1,5}, u_{2,5}\}$. Go to step 2.

\textbf{Step 2:}
Since $T_2 \neq \emptyset$, choose the vertex $u_{2,6}$ from $T_2$, go to step 3.

\textbf{Step 3:}
Since $Y_2=\{1,4\}$, $Y_6=\{2\}$ and $|Y_2 \cup Y_6|<6$, choose the minimum color from the set $\{1,2,\dots, 6\}\setminus \cup_{i=2,6}Y_i$ and construct a new color matrix $C_5$ by putting $3$ in $c_{i,j}$, $i\neq j$, $i,j = 2,6$. Add the vertex $u_{2,6}$ to $P_2$ and remove it from $T_2$. Then

$C_5 = \left(\begin{array}{cccccc}
    0 & 1 & 1 & 1 & 3 & 0 \\
    1 & 0 & 1 & 1 & 4 & 3 \\
    1 & 1 & 0 & 1 & 2 & 2 \\
    1 & 1 & 1 & 0 & 0 & c \\
    3 & 4 & 2 & 0 & 0 & 2 \\
    0 & 3 & 2 & c & 2 & 0\\
  \end{array}\right)$, 

$T_2=  \{u_{4,6}\}$, $P_2 = \{u_{1,5}, u_{2,5}, u_{2,6}\}$. Go to step 2.

\textbf{Step 2:}
Since $T_2 \neq \emptyset$, choose the vertex $u_{4,6}$ from $T_2$, go to step 3.

\textbf{Step 3:}
Since $Y_4=\{1\}$, $Y_6=\{2,3\}$ and $|Y_4 \cup Y_6|<6$, choose the minimum color from the set $\{1,2,\dots, 6\}\setminus \cup_{i=4,6}Y_i$ and construct a new color matrix $C_6$ by putting $4$ in $c_{i,j}$, $i\neq j$, $i,j = 4,6$. Add the vertex $u_{4,6}$ to $P_2$ and remove it from $T_2$. Then

$C_6 = \left(\begin{array}{cccccc}
    0 & 1 & 1 & 1 & 3 & 0 \\
    1 & 0 & 1 & 1 & 4 & 3 \\
    1 & 1 & 0 & 1 & 2 & 2 \\
    1 & 1 & 1 & 0 & 0 & 4 \\
    3 & 4 & 2 & 0 & 0 & 2 \\
    0 & 3 & 2 & 4 & 2 & 0\\
  \end{array}\right)$, 

$T_2=\emptyset$, $P_2 = \{u_{1,5}, u_{2,5}, u_{2,6}, u_{4,6}\}$. Go to step 2.

\textbf{Step 2:}
Since $T_2 = \emptyset$, go to step 1.

\textbf{Step 1:}
Since $S = \emptyset$, stop the process.

Assign the colors to the graph $\hat{H}$ using the matrix $C_M=C_6$, i.e., color the vertex $v$ by the $(i, j)$-th entry $c_{i,j}$ of the matrix $C_M$, whenever $A_i \cap A_j \neq \emptyset$ (see Figure \ref{n6CLGH}), where the numbers 1, 2, 3, 4, 5, 6 corresponds to the colors Maroon, Tan, Green, Red, Blue, Cyan respectively. Extend the coloring of $\hat{H}$ to $G$ by assigning the remaining colors which are not used for $A_i$ from the set of $6$-colors to the vertices of clique degree one in each $A_i$, $1 \leq i \leq 6$. The colored graph $G$ is shown in Figure \ref{n6CLG}.

\begin{figure}[h!]
\centering
\subfloat[Graph $\hat{H}$]{\label{n6CLGH}\includegraphics[width=2.5in]{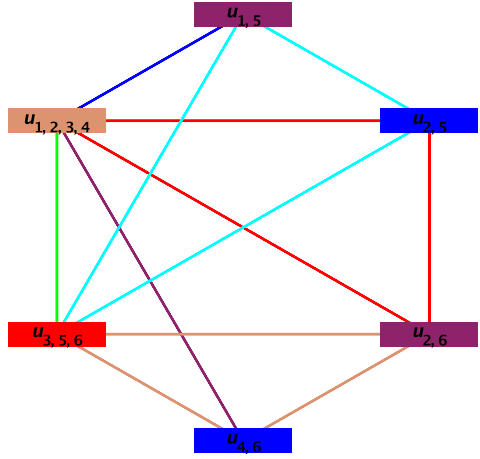}}\\
\subfloat[A 6 coloring of graph $G$]{\label{n6CLG}\includegraphics[width=3.5in]{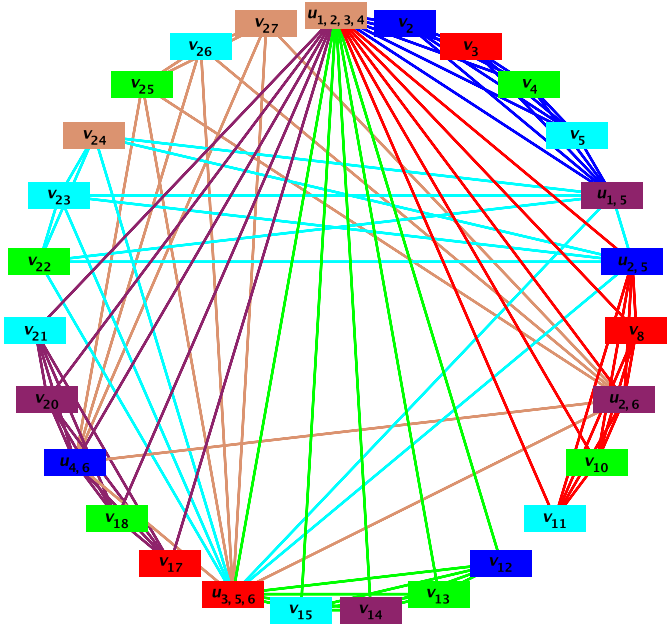}}
\caption{The graphs $\hat{H}$ and $G$, after colors have been assigned to their vertices.}
\label{}
\end{figure}

\begin{figure}[h!]
\centering
\includegraphics[width=3.5in]{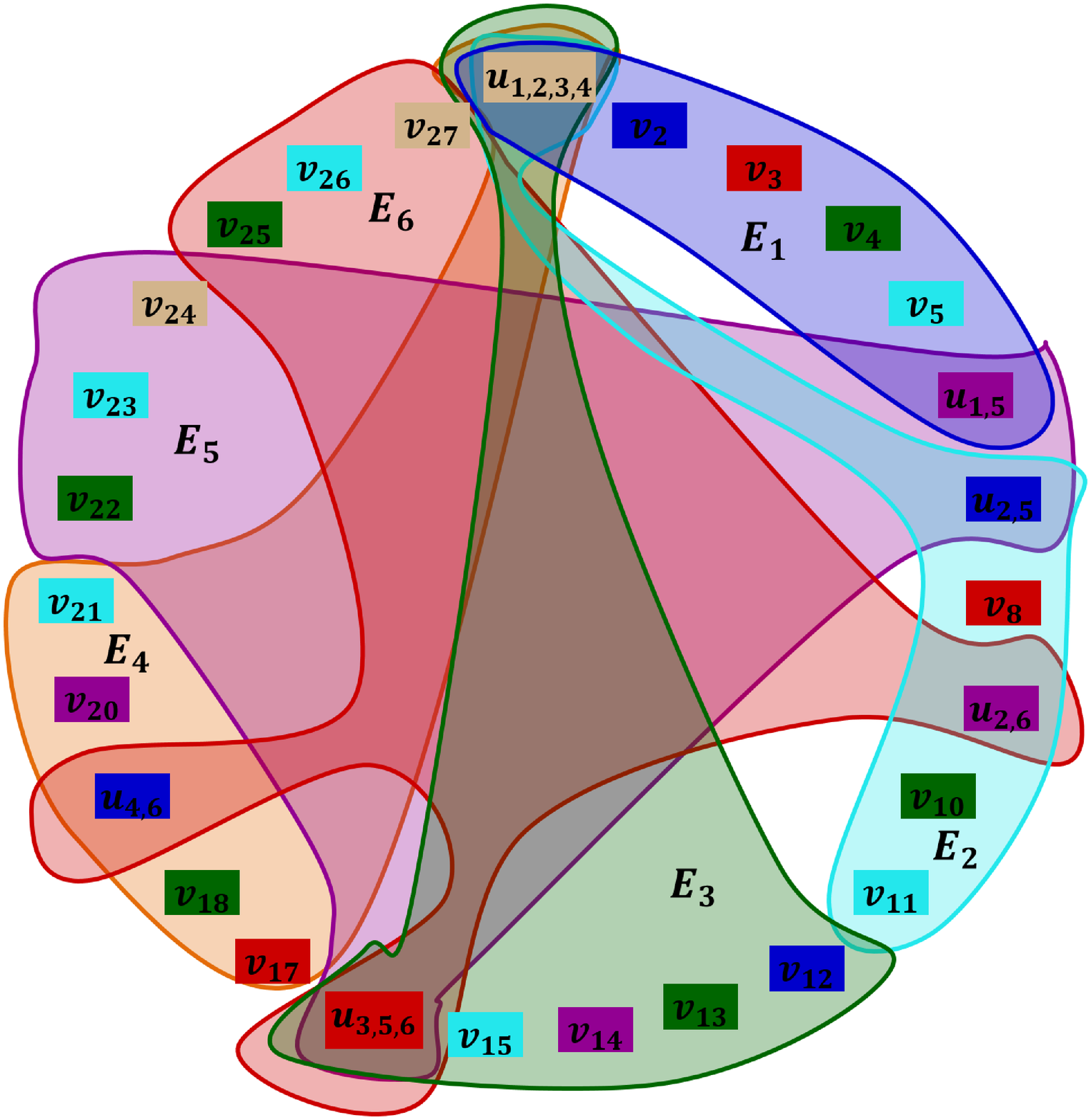}
\caption{A $6$ coloring of hypergraph {\bf{H}} corresponding to the graph $G$ shown in Figure \ref{n6CLG}}
\label{G1_CHY}
\end{figure}

\end{example}


The following results give a relation between the number of complete graphs and clique degrees of a graph.

\begin{theorem}\label{t2}
Let $G$ be a graph satisfying the hypothesis of Conjecture \ref{EFL}. Then if the intersection of any two $A_i$'s is non empty, then 

$$\binom{d^K(v_1)}{2} + \binom{d^K(v_2)}{2} + \dots + \binom{d^K(v_l)}{2} = \frac{n(n-1)}{2},$$

where $\{v_1, v_2, \dots, v_l\}$ is the set of all vertices of clique degree greater than $1$ in $G$.
\end{theorem}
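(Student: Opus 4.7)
The plan is a standard double-counting argument on pairs of cliques. I would count, in two different ways, the number of unordered pairs $\{A_i, A_j\}$ with $i \neq j$.

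First, I would count pairs directly. Since there are $n$ cliques $A_1, \dots, A_n$, the total number of unordered pairs is $\binom{n}{2} = \frac{n(n-1)}{2}$. Under the hypothesis that every two $A_i$'s have non-empty intersection, combined with the standing assumption of Conjecture \ref{EFL} that any two cliques share \emph{at most} one vertex, it follows that every pair $\{A_i, A_j\}$ shares \emph{exactly} one common vertex. Thus each pair is accounted for by a unique common vertex.

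Second, I would count the same quantity by grouping the pairs according to the common vertex. For any vertex $v$ with $d^K(v) = k$, the cliques containing $v$ form a set of size $k$, and these $k$ cliques pairwise share $v$; they contribute exactly $\binom{k}{2}$ pairs of cliques whose common vertex is $v$. Because any two cliques meet in at most one vertex, no pair $\{A_i, A_j\}$ is counted twice: its unique common vertex is the unique $v$ contributing it. Vertices with $d^K(v) = 1$ contribute $\binom{1}{2} = 0$ and may be ignored, leaving the sum over $v_1, \dots, v_l$ of clique degree greater than $1$.

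Equating the two counts then yields
\[
\binom{d^K(v_1)}{2} + \binom{d^K(v_2)}{2} + \cdots + \binom{d^K(v_l)}{2} \;=\; \binom{n}{2} \;=\; \frac{n(n-1)}{2},
\]
which is the claimed identity. There is no real obstacle here: the only subtlety to make explicit is that the ``at most one common vertex'' hypothesis from Conjecture \ref{EFL}, together with the added assumption that every intersection is non-empty, forces each pair of cliques to be counted by a uniquely determined common vertex, so that the two enumerations genuinely match.
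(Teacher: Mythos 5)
Your proof is correct and is essentially the same argument as the paper's: both count the $\binom{n}{2}$ pairs of cliques by their (unique, by the linearity hypothesis) common vertex, each vertex of clique degree $k$ accounting for $\binom{k}{2}$ pairs. The paper merely routes this double count through an auxiliary graph $H_n$ whose vertices $b_{ij}$ index the pairs, whereas you count the pairs directly, which is cleaner but not a different idea.
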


\begin{proof}
If $G$ is isomorphic to the graph $H_n$ for some $n$, then the result is obvious. If not there exists at least one vertex $v$ of clique degree greater than $2$. Define $I_v=\{i:v\in A_i\}$ then $d^K(v) = |I_v|=p$. For every unordered pair of elements $(i, j)$ of $I_v$ there is a vertex $b_{ij} (\text{ where } i<j)$ in $H_n$. Therefore corresponding to the elements of $I_v$ there are $\binom{p}{2}$ vertices in $H_n$. Since $G$ satisfies the hypothesis of Conjecture \ref{EFL}, there is no vertex $v'$ different from $v$ in $G$ such that $v'\in A_i\cap A_j$ where $i,j \in I_v$. Therefore for every vertex $v$ of clique degree greater than $1$ in $G$, there are $\binom{d^K(v)}{2}$ vertices of clique degree greater than $1$ in $H_n$. As there are $\frac{n(n-1)}{2}$ vertices of clique degree greater than $1$ in $H_n$, $\frac{n(n-1)}{2} =\binom{d^K(v_1)}{2} + \binom{d^K(v_2)}{2} + \dots + \binom{d^K(v_l)}{2}$ where $\{v_1, v_2, \dots, v_l\}$ is the set of all vertices of clique degree greater than $1$ in $G$.
\end{proof}


\begin{corollary}\label{c1}
If $G$ is a graph satisfying the hypothesis of conjecture \ref{EFL}, then $G$ has at most $\frac{\binom{n}{2}}{\binom{m}{2}}$ vertices of clique degree $m$ where $m\geq 2$.
\end{corollary}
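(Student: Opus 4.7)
The plan is to prove the corollary by a simple double counting argument, which amounts to an inequality version of Theorem~\ref{t2}. Let $G$ satisfy the hypothesis of Conjecture~\ref{EFL}, and consider the set
$$P = \{\, (v, \{i,j\}) : 1 \leq i < j \leq n,\ v \in A_i \cap A_j\,\}.$$
I would count $|P|$ in two ways.

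First, fix a vertex $v$ with $d^K(v) = d$. The number of unordered pairs $\{i,j\}$ with $v \in A_i \cap A_j$ is exactly $\binom{d}{2}$, so the contribution of all vertices of clique degree at least $2$ gives
$$|P| \;=\; \sum_{v:\, d^K(v) \geq 2} \binom{d^K(v)}{2}.$$
Second, by the hypothesis of Conjecture~\ref{EFL}, any two distinct cliques $A_i$ and $A_j$ share at most one vertex, so each pair $\{i,j\}$ contributes at most one element to $P$. Hence
$$|P| \;\leq\; \binom{n}{2}.$$
Combining these two counts yields the key inequality
$$\sum_{v:\, d^K(v) \geq 2} \binom{d^K(v)}{2} \;\leq\; \binom{n}{2},$$
which reduces to the equality of Theorem~\ref{t2} in the case when every pair of cliques meets.

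To finish, let $N_m$ denote the number of vertices of clique degree exactly $m$, where $m \geq 2$. Since every summand on the left is nonnegative, restricting the sum to vertices of clique degree $m$ gives $N_m \binom{m}{2} \leq \binom{n}{2}$, and dividing by $\binom{m}{2}$ produces the desired bound $N_m \leq \binom{n}{2}/\binom{m}{2}$. There is no real obstacle here; the only thing to be careful about is that Theorem~\ref{t2} as stated assumes all pairwise intersections are nonempty, so I would either cite the inequality form proved above or remark that the same proof of Theorem~\ref{t2} yields $\leq$ in general, which is all that the corollary needs.
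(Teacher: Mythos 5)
Your proof is correct, and it takes a slightly different (and in fact more careful) route than the paper. The paper proves the corollary by contradiction: it assumes there are $q > \binom{n}{2}/\binom{m}{2}$ vertices of clique degree $m$ and invokes Theorem~\ref{t2} as an \emph{equality} $\sum_{i}\binom{d^K(v_i)}{2} = \binom{n}{2}$ to derive $p \geq p+1$. But Theorem~\ref{t2} carries the hypothesis that every pair of cliques intersects, whereas the corollary does not; the paper applies the theorem without addressing this mismatch. You sidestep the issue by running the double count directly on the set of incidences $(v,\{i,j\})$ with $v \in A_i \cap A_j$: linearity gives $\sum_{v}\binom{d^K(v)}{2}$ on one side, and the linearity of the hypergraph (each pair of cliques meets in at most one vertex) gives the upper bound $\binom{n}{2}$ on the other, yielding the inequality that is all the corollary actually needs. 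The underlying counting idea is the same as in the proof of Theorem~\ref{t2}, so the two arguments are close in spirit, but your version is logically self-contained, avoids the contradiction framing, and is valid without the extra nonempty-intersection hypothesis --- effectively repairing a small gap in the paper's own deduction.
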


\begin{proof}
Let $A=\{v_1, v_2, \dots , v_l\}$ be the set of vertices of clique degree greater than $1$ in $G$ and $p=\frac{\binom{n}{2}}{\binom{m}{2}}$.
We have to prove that $G$ has at most $p$ vertices of clique degree $m$.
Suppose $G$ has $q>p$ vertices of clique degree $m$. Then by the definition of $A$, it follows that, $q$ vertices are in $A$. Let those vertices be $v_1, v_2, \dots, v_q$.
By Theorem \ref{t2} we get,

\begin{eqnarray*}
\frac{n(n-1)}{2}& =&\binom{d^K(v_1)}{2} + \binom{d^K(v_2)}{2} + \dots + \binom{d^K(v_l)}{2}\\
& \geq & \binom{d^K(v_1)}{2} + \binom{d^K(v_2)}{2} + \dots + \binom{d^K(v_q)}{2}\\
& =& q \binom{m}{2}\\
& \geq & (p+1)\binom{m}{2}\\
\frac{\binom{n}{2}}{\binom{m}{2}} & \geq & p+1\\
p & \geq & p+1,
\end{eqnarray*}

which is a contradiction. Hence there are at most $\frac{\binom{n}{2}}{\binom{m}{2}}$ vertices of clique degree $m$ in $G$, where $m\geq 2$.
\end{proof}


\bibliographystyle{plain}


\begin{thebibliography}{10}

\bibitem{berge1990onvizing}
Claude Berge.
\newblock On two conjectures to generalize {V}izing's theorem.
\newblock {\em Matematiche (Catania)}, 45(1):15--23 (1991), 1990.
\newblock Graphs, designs and combinatorial geometries (Catania, 1989).

\bibitem{deza1978intersection}
M~Deza, P~Erd{\"o}s, and P~Frankl.
\newblock Intersection properties of systems of finite sets.
\newblock {\em Proceedings of the London Mathematical Society}, 3(2):369--384,
  1978.

\bibitem{erdHos1981combinatorial}
P{\'a}l Erd{\H{o}}s.
\newblock On the combinatorial problems which i would most like to see solved.
\newblock {\em Combinatorica}, 1(1):25--42, 1981.

\bibitem{erdHos1975problems}
PAUL Erd{\H{o}}s.
\newblock Problems and results in graph theory and combinatorial analysis.
\newblock {\em Proc. British Combinatorial Conj., 5th}, pages 169--192, 1975.

\bibitem{faber2010uniformregular}
Vance Faber.
\newblock The {E}rd{\H o}s-{F}aber-{L}ov\'asz conjecture---the uniform regular
  case.
\newblock {\em J. Comb.}, 1(2):113--120, 2010.

\bibitem{jackson2007note}
Bill Jackson, G.~Sethuraman, and Carol Whitehead.
\newblock A note on the {E}rd{\H o}s-{F}arber-{L}ov\'asz conjecture.
\newblock {\em Discrete Math.}, 307(7-8):911--915, 2007.

\bibitem{jensen2011graph}
Tommy~R Jensen and Bjarne Toft.
\newblock {\em Graph coloring problems}, volume~39.
\newblock John Wiley \& Sons, 2011.

\bibitem{kahn1992coloring}
Jeff Kahn.
\newblock Coloring nearly-disjoint hypergraphs with $n+o(n)$ colors.
\newblock {\em Journal of Combinatorial Theory, Series A}, 59(1):31--39, 1992.

\bibitem{mitchem2010arscombin}
John Mitchem and Randolph~L. Schmidt.
\newblock On the {E}rd{\H o}s-{F}aber-{L}ov\'asz conjecture.
\newblock {\em Ars Combin.}, 97:497--505, 2010.

\bibitem{arroyo2008dense}
Abd{\'o}n S{\'a}nchez-Arroyo.
\newblock The {E}rd{\H o}s-{F}aber-{L}ov\'asz conjecture for dense hypergraphs.
\newblock {\em Discrete Math.}, 308(5-6):991--992, 2008.

\end{thebibliography}

\end{document}